\documentclass[12pt]{amsart}

\usepackage{hyperref}
\usepackage{amssymb}
\usepackage[bbgreekl]{mathbbol}
\usepackage{graphicx}
\usepackage{ifthen}
\usepackage{pict2e}
\usepackage{xargs}
\usepackage{xspace}
\usepackage{xcolor}
\usepackage{pgf,tikz}
\usepackage{pgfplots}
\usepackage{environ}
\usepackage{caption}
\usetikzlibrary{arrows}
\usepackage{enumitem}
\usepackage{xifthen}
\sloppy

\newcounter{dummy}
\makeatletter
\newcommand\myitem[1][]{\item[#1]\refstepcounter{dummy}\def\@currentlabel{#1}}
\makeatother

\makeatletter
\newsavebox{\measure@tikzpicture}
\NewEnviron{scaletikzpicturetowidth}[1]{%
	\def\tikz@width{#1}%
	\begin{lrbox}{\measure@tikzpicture}%
		\BODY
	\end{lrbox}%
	\pgfmathparse{#1/\wd\measure@tikzpicture}%
	\BODY
}
\makeatother

\DeclareSymbolFontAlphabet{\mathbb}{AMSb}

\newcommand{\thistheoremname}{}
\newtheorem*{genericthm*}{\thistheoremname}
\newenvironment{namedthm*}[1]
{\renewcommand{\thistheoremname}{#1}%
	\begin{genericthm*}}
	{\end{genericthm*}}


\newcommand{\Bairespace}[1][]{
	\ifthenelse{\equal{#1}{}}{\functions{\N}{\N}}{\functions{#1}{\N}}
}
\newcommand{\bbL}{\mathbb{L}}
\newcommand{\bbX}{\mathbb{X}}

\newcommand{\Cantorspace}[1][]{
	\ifthenelse{\equal{#1}{}}{\functions{\N}{2}}{\functions{#1}{2}}
}

\newcommandx{\concatenation}[2][1 = undefined, 2 = undefined]{
	\ifthenelse{\equal{#1}{undefined}}{{}\smallfrown}{
		\ifthenelse{\equal{#2}{undefined}}{\bigoplus #1}{\bigoplus_{#1} #2}
	}
}

\newcommandx{\functions}[3][3 =]{
	\ifthenelse{\equal{#3}{}}{#2^{#1}}{#2_{#3}^{#1}}
}

\newcommand{\Gzero}[1][]{
	\ifthenelse{\equal{#1}{}}
	{\mathbb{G}_0}
	{\mathbb{G}_{0,n}}
}
\newcommandx{\Hzero}[2][2 = undefined]{
	\ifthenelse{\equal{#2}{undefined}}
	{\mathbb{H}_{#1}}
	{\mathbb{H}_{#1, #2}}
}
\newcommandx{\intersection}[2][1 =, 2 =]{
	\ifthenelse{\equal{#1}{}}{\cap}{
		\ifthenelse{\equal{#2}{}}{\bigcap #1}{{\bigcap_{#1} #2}}
	}
}
\newcommand{\Lzero}[1][]{\ifthenelse{\equal{#1}{}}{\bbL_0}{L_{0, #1}}}
\newcommand{\Lzerospace}[1][]{\ifthenelse{\equal{#1}{}}{\bbX_0}{X_{0, #1}}}

\newcommand{\modulo}[1]{\ (\text{mod } 2)}
\newcommand{\N}{\mathbb{N}}

\newcommandx{\product}[2][1 =, 2 =]{
	\ifthenelse{\equal{#1}{}}{\times}{
		\ifthenelse{\equal{#2}{}}{\prod #1}{{\prod_{#1} #2}}
	}
}

\newcommandx{\sequence}[2][2 = undefined]{
	\ifthenelse{\equal{#2}{undefined}}{(#1)}{
		(#1)_{#2}
	}
}

\newcommandx{\set}[2][2 = undefined]{
	\ifthenelse{\equal{#2}{undefined}}{\{ #1 \}}{
		\{ #1 \suchthat #2 \}
	}
}
\newcommandx{\sets}[3][3 =]{
	\ifthenelse{\equal{#3}{}}{[#2]^{#1}}{[#2]^{#1}_{#3}}
}

\newcommand{\suchthat}{\mid}

\renewcommand{\restriction}[2]{#1 \upharpoonright #2}

\newcommandx{\union}[2][1 =, 2 =]{
	\ifthenelse{\equal{#1}{}}{\cup}{
		\ifthenelse{\equal{#2}{}}{\bigcup #1}{{\bigcup_{#1} #2}}
	}
}




\newtheorem{theorem}{Theorem}[section]
\newtheorem{lemma}[theorem]{Lemma}

\newtheorem{claim}[theorem]{Claim}

\newtheorem{proposition}[theorem]{Proposition}

\theoremstyle{definition}

\newtheorem{definition}[theorem]{Definition}

\numberwithin{equation}{section}



\newcommand{\bd}{\begin{definition}}
	\newcommand{\ed}{\end{definition}}

\DeclareMathOperator{\dist}{dist}
\DeclareMathOperator{\didistance}{didist}

\newcommand{\distance}[3]{\ifthenelse{\isempty{#3}}{\dist(#1,#2)}{\dist^{#3}(#1,#2)}}
\newcommand{\didist}[3]{\ifthenelse{\isempty{#3}}{\didistance(#1,#2)}{\didistance^{#3}(#1,#2)}}
\newcommand{\digraph}[3]{\ifthenelse{\equal{#1}{b}}{\mathbb{#2}_{#3}}
	{{#2}_{#3}}}
\newcommand{\linegraph}[3]{\ifthenelse{\equal{#1}{b}}{\mathbb{#2}_{#3}}
	{#2_{#3}}}

\newcommand{\underlyingspace}[3]{\ifthenelse{\equal{#1}{b}}{\mathbb{#2}_{#3}}
	{#2_{#3}}}
\newcommand{\distanceset}[2]{\ifthenelse{\isempty{#2}}{D(#1)}{D^{#2}(#1)}}

\newcommand{\concatt}{%
	\mathbin{\raisebox{1ex}{\scalebox{.7}{$\frown$}}}%
}

\title{Ramsey and Hypersmoothness}

\author[Z. Vidny\'{a}nszky]{Zolt\'{a}n Vidny\'{a}nszky}

	\address{
		Zolt\'{a}n Vidny\'{a}nszky \\
		California Institute of Technology\\ Department of Mathematics\\
1200 E California Blvd, Pasadena, CA 91125 \\ USA
	}
	\email{vidnyanz@caltech.edu}
	\urladdr{
		http://https://www.its.caltech.edu/~vidnyanz/
	}

\begin{document}

	\begin{abstract}
		Combining canonization results of Pr\"omel-Voigt, Mathias, and Soare, we provide a new, natural example of an $F_\sigma$ equivalence relation that is not hypersmooth. 
	\end{abstract}
		\maketitle
	
	Assume that $E$ is a countable Borel equivalence relation on the space $[\N]^\N$. It follows from the work of Mathias \cite{mathias} and Soare \cite{soare} that one can find an $x \in [\N]^\N$ such that $ \restriction{E}{[x]^\N}$ is hyperfinite (in fact, contained in $\mathbb{E}_0$, see \cite{zapletal}). In contrast, equivalence relations with uncountable classes can exhibit complicated behavior on every set of the form $[x]^\N$. 
	
	In this note, we give an example of an $F_\sigma$ equivalence relation on $[\N]^\N$ that is not hypersmooth (see \cite{kechris1997classification} for more information on such equivalence relations) and the reason for this is completely Ramsey-theoretic. The equivalence relation comes from a natural generalization of the shift-graph (see \cite{kechrissolecki,toden}) and it is generated by two continuous functions.
	
	\section{The construction}

	\begin{theorem}
		\label{t:mmain}
		Assume that $E$ is an equivalence relation on $[\N]^\N$ such that for every $x \in [\N]^\N$ 
		\begin{itemize}
			\item there exist almost disjoint $y,z \in [x]^\N$ such that $yEzEx$ and
			\item  $\restriction{E}{[x]^\N}$ has uncountably many equivalence classes.
		\end{itemize}
		 Then $E$ is not hypersmooth.
	\end{theorem}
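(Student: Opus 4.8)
The plan is to argue by contradiction, using the standard characterization that a Borel equivalence relation is hypersmooth exactly when it is an increasing union of smooth equivalence relations (equivalently, $\le_B E_1$, the eventual-agreement relation on $(2^\N)^\N$). So suppose $E$ were hypersmooth and write $E = \bigcup_n S_n$, where $S_0 \subseteq S_1 \subseteq \cdots$ are smooth and each $S_n$ is the kernel $S_n = \ker(g_n)$ of a Borel map $g_n \colon [\N]^\N \to 2^\N$. Concretely, fixing a Borel reduction $\Phi = (\phi_m)_m \colon [\N]^\N \to (2^\N)^\N$ of $E$ to $E_1$, one may set $y \mathrel{S_n} z \iff \phi_m(y) = \phi_m(z)$ for all $m \ge n$. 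First I would reduce the whole problem to a single combinatorial configuration on one Ramsey cube $[x^*]^\N$.

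The central step is a \emph{simultaneous canonization}. Using the Ramsey property of $[\N]^\N$ (Mathias) together with the canonization theorem of Pr\"omel--Voigt for Borel maps on $[\N]^\N$, and a fusion argument to treat all $n$ at once, I would produce $x^* \in [\N]^\N$ such that for every $n$ the relation $\restriction{S_n}{[x^*]^\N}$ is in canonical form: there is a set $I_n \subseteq \N$ with
$$ y \mathrel{S_n} z \iff (y_i)_{i \in I_n} = (z_i)_{i \in I_n} \qquad (y, z \in [x^*]^\N), $$
where $y = \{y_0 < y_1 < \cdots\}$ denotes the increasing enumeration. Since $S_n \subseteq S_{n+1}$ and agreeing on more coordinates gives a finer relation, the $I_n$ may be taken decreasing, $I_{n+1} \subseteq I_n$.

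With $x^*$ in hand, the two hypotheses collide. By the first hypothesis applied to $x^*$ there are almost disjoint $y, z \in [x^*]^\N$ with $y \mathrel{E} z$; as $E = \bigcup_n S_n$ is an increasing union, $y \mathrel{S_n} z$ for some $n$, that is, $(y_i)_{i \in I_n} = (z_i)_{i \in I_n}$. If $I_n$ were infinite, this common subsequence would be an infinite subset of $y \cap z$, contradicting almost disjointness; hence $I_n$ is finite. But then $\restriction{E}{[x^*]^\N} \supseteq \restriction{S_n}{[x^*]^\N}$, and the latter has only countably many classes, since a class is determined by the finitely many values $(y_i)_{i \in I_n}$ drawn from $x^*$. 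A coarser equivalence relation has no more classes than a finer one, so $\restriction{E}{[x^*]^\N}$ has at most countably many classes, contradicting the second hypothesis. This would yield the theorem.

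I expect the canonization to be the main obstacle. One must verify that the canonical forms produced for smooth (kernel-type) equivalence relations on $[\N]^\N$ are \emph{exactly} the coordinate-agreement relations $E_{I_n}$ and not some coarser identification of the $I_n$-coordinates (this exactness is precisely what makes the almost-disjointness argument force $I_n$ finite), and one must carry the canonization out simultaneously along the whole sequence $(S_n)_n$ via a diagonal/fusion construction inside the Ellentuck framework. Reducing the potentially countable-to-one behaviour on the relevant coordinates to an injective one, after passing to a further subcube, is where the Mathias--Soare analysis of induced countable Borel equivalence relations enters. By comparison, the remaining steps are soft: passing to the $S_n$ is just the definition of hypersmoothness, and the final count is elementary.
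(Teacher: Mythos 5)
Your skeleton is the paper's: assume a reduction to $E_1$, split $E$ into the increasing kernels $S_n$, canonize on a single cube, and let almost disjointness collide with the second hypothesis. Two issues, one repairable and one central. The repairable one: the canonical form you posit is not what Pr\"omel--Voigt gives. Theorem \ref{t:prvoigt} produces, for each Borel $f$, a \emph{continuous selection map} $\Gamma$ with $\Gamma(y)\subseteq y$, $\ker f=\ker\Gamma$ on the cube, a finite/infinite dichotomy, and the no-proper-initial-segment property --- not a fixed coordinate set $I_n$ with $y\,S_n\,z\iff (y_i)_{i\in I_n}=(z_i)_{i\in I_n}$. The ``exactness'' you flag as needing verification is exactly what fails: selections such as ``take the elements of $y$ followed by a large gap'' are legitimate canonical forms and are not coordinate selections, so you would owe a separate (and unnecessary) argument that coordinate form can be forced on a further subcube. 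Fortunately your endgame survives the correction, and becomes cleaner: if $y,z$ are almost disjoint and $\Gamma_n(y)=\Gamma_n(z)$, then this common set lies in $y\cap z$, hence is finite; the dichotomy makes all $\Gamma_n$-values finite; and finite subsets of the countable set $x^*$ give only countably many $S_n$-classes, hence countably many $E$-classes on $[x^*]^\N$.

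The genuine gap is the ``simultaneous canonization'' itself, which is the entire content of the paper (Theorem \ref{t:main} and Lemma \ref{l:main}), and your sketch misidentifies the obstruction. In any fusion producing $x^*=\bigcup_n s_n$ with reservoirs $x_n$, a set $y\in[x^*]^\N$ lies in $[s_n\cup x_n]^\N$, not in $[x_n]^\N$; so canonizing $f_n$ on $[x_n]^\N$ at stage $n$ says nothing about $f_n$ on the limit cube. One must canonize all $2^{|s_n|}$ stem-restrictions $f_n^t(w)=f_n(t\cup w)$, $t\subseteq s_n$, and --- this is the crux --- control almost disjoint pairs $y,z$ that meet the stem \emph{differently}: for $r=y\cap s_n\neq z\cap s_n=t$, the equality $f_n(y)=f_n(z)$ relates the two a priori unrelated invariants $\Gamma_r$ and $\Gamma_t$. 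This is precisely where Theorem \ref{t:canonize} (Mathias--Soare) is used in the paper: applied to the Souslin-measurable compositions $\Gamma^*_t\circ f_n^r$, it forces $\Gamma_r(y\cap x')$ to be almost contained in $z\cap x'$ as well as contained in $y\cap x'$, hence finite. Your stated role for Mathias--Soare (``reducing countable-to-one behaviour to injective'') is not its role here; you are conflating it with the hyperfiniteness-on-a-cube result for countable Borel equivalence relations mentioned in the introduction. Note also that even after this work, what the fusion delivers on the limit cube is \emph{weaker} than canonization of each $S_n$ --- only that almost-disjoint agreement forces the common value into one fixed countable set $C$ --- so the final count cannot be run from a single almost disjoint pair as you do; one must, as in the paper, invoke the full first hypothesis ($yEzEx'$) at \emph{every} $x'\in[x^*]^\N$ to place each $\varphi(x')$ into countably many $E_1$-classes. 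As written, the step you defer to ``fusion plus Mathias--Soare'' is where the whole proof lives, and the version you describe would not go through.
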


		In order to establish this theorem, we will need the following statement.
	\begin{theorem}
			\label{t:main}
			Assume that $(f_n)_{n\in \N}:[\N]^{\N} \to (2^\N)^\N$ are Borel. There exist an $x \in [\N]^\N$ and a countable set $C$ such that for every $y,z \in [x]^\N$ almost disjoint and $n \in \N$ we have that $f_n(z)=f_n(y)$ implies $f_n(y) \in C$. 
	\end{theorem}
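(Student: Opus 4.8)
The plan is to deduce the statement from the canonization theorem of Pr\"omel and Voigt for \Borel{} functions on the Ramsey space, upgraded to act on all of the $f_n$ simultaneously by a fusion argument in the style of Mathias and Soare. Throughout, for $w \in [\N]^\N$ and $i \in \N$ write $w(i)$ for the $i$-th element of $w$ in its increasing enumeration, and for $S \subseteq \N$ let $\sim_S$ be the equivalence relation on $[\N]^\N$ given by $y \sim_S z$ iff $y(i) = z(i)$ for all $i \in S$.

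First I would recall the canonical forms. The Pr\"omel--Voigt theorem provides, for a single \Borel{} map $g : [\N]^\N \to (2^\N)^\N$ into the \Polish{} space $(2^\N)^\N$, a set $A \in [\N]^\N$ and an index set $S \subseteq \N$ with $g(y) = g(z) \iff y \sim_S z$ for all $y,z \in [A]^\N$. The only feature of the canonical forms I will use is that each is coordinate agreement on a fixed index set $S$; in particular, the equivalence that $g$ induces is unchanged when we pass from $[A]^\N$ to $[B]^\N$ for any $B \in [A]^\N$, since the defining condition of $\sim_S$ does not refer to the ambient set. This persistence is what makes the forms stable under the shrinking that a fusion performs.

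Next I would run a fusion to obtain a single $x$ working for every $f_n$ at once. I would construct a fusion sequence of conditions $(s_k, A_k)$, with finite stems $s_k$ and infinite reservoirs $A_k$ satisfying $s_k \sqsubseteq s_{k+1}$, $\max(s_k) < \min(A_k)$, $A_{k+1} \subseteq A_k$, and $s_{k+1} \setminus s_k \subseteq A_k$; at stage $n$ I would apply the canonization theorem to $f_n$ above the stem fixed so far, thin the reservoir accordingly, record the index set $S_n$, and only then add the next element of the stem. The diagonal set $x = \union[k]{} s_k$ is then infinite, and the persistence noted above guarantees that $\restriction{f_n}{[x]^\N}$ induces $\sim_{S_n}$ for every $n$. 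I expect this simultaneous canonization to be the main obstacle: the single-function theorem has to be localized above finite stems and fused without the successive shrinking destroying the finitely many decisions already made, and it is precisely here that the selectivity and Ramsey properties established by Mathias and Soare are needed. (Note that canonizing the single product map $y \mapsto (f_n(y))_n$ would not suffice, as it controls only simultaneous coincidence of all coordinates, whereas we must control each $f_n$ individually.)

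Finally I would analyze the two cases for each $n$. Fix almost disjoint $y, z \in [x]^\N$ with $f_n(y) = f_n(z)$; then $y \sim_{S_n} z$. If $S_n$ were infinite, then $y(i) = z(i)$ for infinitely many $i$, so $y$ and $z$ would share infinitely many elements, contradicting almost disjointness; hence $S_n$ is finite. Then $f_n(y)$ is determined by the finite tuple $\bigl( y(i) \bigr)_{i \in S_n}$ of elements of $x$, and since $x$ is countable there are only countably many such tuples, so the set $C_n$ of values attained by $f_n$ on $[x]^\N$ is countable. Taking $C = \union[n]{} C_n$ yields a countable set with $f_n(y) \in C_n \subseteq C$ whenever $y,z \in [x]^\N$ are almost disjoint and $f_n(y) = f_n(z)$, which is exactly the desired conclusion.
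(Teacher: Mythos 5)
Your statement of the Pr\"omel--Voigt theorem is not the right one, and the error matters. On $[\N]^\N$ the canonical equivalence relations are \emph{not} the relations $\sim_S$ of coordinate agreement on a fixed index set $S$; that is the Erd\H{o}s--Rado canonical form for $[\N]^k$ with $k$ finite. In the infinite-dimensional case (the paper's Theorem \ref{t:prvoigt}) the invariant is a continuously selected subset $\Gamma(y) \subseteq y$, where \emph{which} positions are selected may depend on $y$ itself. For instance, $f(y) = y(y(0))$ (the $y(0)$-th element of $y$, in your notation) has invariant $\Gamma(y) = \{y(y(0))\}$, and on any $[x]^\N$ one can arrange $f(y)=f(z)$ while $y(i) \neq z(i)$ for any prescribed $i$, so the induced equivalence is not $\sim_S$ for any nonempty $S$, nor is it trivial. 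This particular error is repairable, because your two-case analysis survives with $\Gamma$ in place of $S$: if all $\Gamma(y)$ are infinite, then $f(y)=f(z)$ forces the almost disjoint sets $y,z$ to contain the common infinite set $\Gamma(y)=\Gamma(z)\subseteq y\cap z$, a contradiction; if all are finite, then $f$ attains only countably many values on $[x]^\N$ since $\Gamma$ ranges over finite subsets of the countable set $x$.

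The genuine gap is in the fusion. The canonization obtained at stage $n$ holds on $[A]^\N$ for some reservoir $A$, and your persistence observation transfers it only to $[B]^\N$ with $B \subseteq A$. But the diagonal set $x$ is \emph{not} a subset of any reservoir: it contains the stems. So the claim that $\restriction{f_n}{[x]^\N}$ induces the canonical form does not follow --- a set $y\in[x]^\N$ typically meets $s_n$, and nothing you established applies to such $y$. To fix this one must canonize every trace function $w \mapsto f_n(t\cup w)$ for all $t \subseteq s_n$ separately, and then --- this is the crux --- control coincidences $f_n(r\cup y')=f_n(t\cup z')$ \emph{across two different traces} $r\neq t$, about which the separate canonizations say nothing. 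This cross-trace coherence is exactly what the paper's Lemma \ref{l:main} supplies: it forms the Souslin-measurable compositions $g^{r,t}=\Gamma^*_t\circ f^r$ and applies the Mathias--Soare theorem (Theorem \ref{t:canonize}: if $g(y)\in[x]^\N$ then $g(y)\setminus y$ is finite) to conclude that in any such coincidence the invariant $\Gamma_r(y\cap x')$ is almost contained in both of the almost disjoint sets, hence finite, after which Lemma \ref{l:const} yields membership in a fixed countable set. Your sketch acknowledges that localizing above stems is ``the main obstacle'' and invokes Mathias--Soare, but assigns it the wrong role (fusion bookkeeping and selectivity) and offers no mechanism for the cross-trace problem, which is the mathematical heart of the proof.
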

		
		The proof of this theorem is based on the results of Pr\"omel-Voigt \cite{promel}, Mathias \cite{mathias} and Soare \cite{soare} and postponed to the next section.
	\begin{proof}[Proof of Theorem \ref{t:mmain}]
		Assume the contrary, and let $\varphi$ be a Borel reduction from $E$ to $E_1$. Let $f_n=p^n \circ \varphi$ where $p((x_n)_{n \in \N})=(x_{n+1})_{n \in \N}$. Clearly, if $yEz$ then for some $n$ we have $f_n(x)=f_n(y)$. 
		
		Let $x$ and $C$ be as in Theorem \ref{t:main} for the sequence $(f_n)_{n \in \N}$. We claim that $\varphi([x]^\N)$ is contained in countably many $E_1$ equivalence classes, contradicting our second assumption. Indeed, take the countable set $C'=\{(x_n)_{n \in \N}:\exists m \ (\forall n<m \ x_n=0 \land (x_{m},x_{m+1},\dots) \in C)\}$. If $x' \in [x]^\N$ then by the first assumption, there are $y,z \in [x']^\N$ almost disjoint such that $yEzEx'$. Then, for some $n$ we have $f_n(y)=f_n(z) \in C$, so $\varphi(x') \in [C']_{E_1}$.
		\end{proof}
		We identify elements of $[\N]^\N$ with their increasing enumeration.  
		Let $S_0,S_1$ be maps on $[\N]^\N$ defined by \[S_0(\{n_0,n_1,n_2,\dots\})=\{n_1,n_2,n_3\dots\},\]
		\[S_1(\{n_0,n_1,\dots\})=\{n_1,n_3,n_5,\dots\}.\]	
	 	Denote by $G$ the graph determined by $S_0$ and $S_1$, and let $E$ be the connected component equivalence relation of $G$. 
		
		For $x,y \in [\N]^\N$ define $x \ll y$ if for each $m$ there is a $k$ such that $|x \cap (y_k,y_{k+1})|>m$. It is easy to check the following: 
		
		\begin{lemma}
			\label{l:dominates}
			Assume that $x \ll y$ and $\{x',x\} \in G$. Then $x'\ll y$.
		\end{lemma}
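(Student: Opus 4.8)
The plan is to prove Lemma~\ref{l:dominates} by unpacking the definition of $\ll$ and using the explicit descriptions of $S_0$ and $S_1$ to show that whenever $x \ll y$ and $\{x', x\} \in G$, the new point $x'$ still has arbitrarily long runs inside consecutive $y$-intervals. The key observation is that $x'$ is obtained from $x$ by applying $S_0$ or $S_1$ (in either direction, since $G$ is undirected), and each of these maps can only \emph{remove} elements from $x$ or produce $x$ from a larger set. Concretely, I would first reduce to four cases: $x' = S_0(x)$, $x' = S_1(x)$, $x = S_0(x')$, or $x = S_1(x')$.

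First I would handle the two cases where $x'$ is a subset of $x$. If $x' = S_0(x)$ then $x'$ is just $x$ with its first element deleted, so $x' \subseteq x$ and $x'$ agrees with $x$ on all but finitely many elements; since removing finitely many points cannot destroy the property that $|x \cap (y_k, y_{k+1})| > m$ for \emph{some} $k$ (one simply discards the finitely many intervals that lost points and passes to larger $k$), $x \ll y$ immediately gives $x' \ll y$. The case $x' = S_1(x)$ is the crucial one: here $x'$ keeps only the odd-indexed elements of $x$, so it roughly halves the density. I would argue that if $x$ has more than $2m+1$ elements in some interval $(y_k, y_{k+1})$, then $x'$ retains more than $m$ of them there; since $x \ll y$ provides, for each target $m$, an interval containing more than $2m+1$ elements of $x$, the set $x'$ inherits the corresponding bound with threshold $m$, so $x' \ll y$.

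Next I would treat the two cases where $x'$ is a \emph{superset} of $x$, i.e.\ $x = S_0(x')$ or $x = S_1(x')$. These are immediate once the earlier monotonicity is noted: if $x \subseteq x'$ (which holds in both superset cases, since $x$ is obtained from $x'$ by deleting elements), then every interval of $y$ contains at least as many points of $x'$ as of $x$, so $x \ll y$ trivially forces $x' \ll y$. Thus the superset cases are easier than the subset cases, and the whole argument turns on the $S_1$ subset case.

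The main obstacle, such as it is, will be bookkeeping the thinning done by $S_1$ precisely enough to convert a density bound for $x$ into one for $x'$ with the correct threshold, and being careful that the enumeration indices of $x$ versus $x'$ do not cause an off-by-one error in which elements survive. This is entirely routine: the only genuine content is the observation that $\ll$ is insensitive to deleting finitely many points and to halving densities, both of which follow directly from the ``for each $m$ there is a $k$'' quantifier structure. I would therefore keep the write-up to a short case analysis, remarking that the claim is ``easy to check'' exactly because each of the four cases reduces to this monotonicity-plus-thinning principle.
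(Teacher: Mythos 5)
Your proof is correct: the paper itself gives no argument (it states the lemma as ``easy to check''), and your four-case analysis — deleting one point ($S_0$), halving via odd indices ($S_1$, using the $2m+1$ threshold and the fact that elements of $x$ in an interval $(y_k,y_{k+1})$ occupy consecutive indices), and the trivial superset cases — is exactly the routine verification the paper intends. No gaps.
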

		
		Now we show that $E$ is an example of the phenomenon described in the main theorem.
	\begin{proposition}
		$E$ is not hypersmooth.
	\end{proposition}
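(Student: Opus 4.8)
The plan is to derive the Proposition from Theorem~\ref{t:mmain}, so I must verify its two hypotheses for the connected-component relation $E$ of $G$. Throughout write $x=\{x_0<x_1<x_2<\dots\}$ for the increasing enumeration.

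For the first hypothesis I exhibit two disjoint members of $[x]^\N$ that are both $E$-equivalent to $x$. Put $y=S_1(x)=\{x_1,x_3,x_5,\dots\}$ and $z=S_1(S_0(x))=\{x_2,x_4,x_6,\dots\}$. Both are infinite subsets of $x$, hence lie in $[x]^\N$, and they are disjoint (odd- versus even-indexed elements of $x$), so in particular almost disjoint. The edge $\{x,y\}\in G$ gives $yEx$, while the path $x,\,S_0(x),\,S_1(S_0(x))=z$ gives $zEx$; thus $yEzEx$, as required.

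The real content is the second hypothesis, that $\restriction{E}{[x]^\N}$ has uncountably many classes. Here I invoke Lemma~\ref{l:dominates}: since $\ll v$ is preserved along every edge of $G$, and hence along every $G$-path, the set $\{w:w\ll v\}$ is a union of $E$-classes for each fixed $v$, so $wEw'$ implies $w\ll v$ iff $w'\ll v$. It therefore suffices to build a family $\{w_\sigma:\sigma\in\Cantorspace\}\subseteq[x]^\N$ together with, for each $\sigma\neq\tau$, a witness $v$ with $w_\sigma\ll v$ but $w_\tau\not\ll v$: then $w_\sigma\,E\,w_\tau$ would force, by this invariance, $w_\tau\ll v$, a contradiction, so $\sigma\mapsto[w_\sigma]_E$ is injective, and as the $w_\sigma$ lie in $[x]^\N$ this gives $\restriction{E}{[x]^\N}$ continuum-many classes. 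I build the $w_\sigma$ from blocks indexed by the binary tree $2^{<\N}$: partition a cofinite part of the enumeration of $x$ into consecutive finite blocks separated by single spacer elements, assigning to each node $t\in 2^{<\N}$ a block $B_t$ of $|t|+1$ consecutive elements of $x$, the blocks pairwise disjoint and each flanked by spacers, and set $w_\sigma=\bigcup_n B_{\restriction{\sigma}{n}}$, which is infinite since the block sizes tend to infinity. If $\sigma\neq\tau$ first differ at $n$, then the deep blocks $B_{\restriction{\sigma}{m}}$ and $B_{\restriction{\tau}{m}}$ for $m>n$ lie in disjoint subtrees, hence are disjoint sets of unbounded size. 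Let $v$ consist of all spacers together with all of $\tau$'s deep blocks. Each deep block of $\sigma$ is then disjoint from $v$ but flanked by spacers of $v$, so it sits inside a single $v$-gap, and as these sizes are unbounded $w_\sigma\ll v$; whereas $\tau$'s deep blocks are absorbed into $v$ and its finitely many shallow blocks meet any $v$-gap in boundedly many points, so $w_\tau\not\ll v$.

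The one delicate point, which I expect to be the main obstacle, is the layout bookkeeping in this last step: the blocks and spacers must be arranged along the enumeration of $x$ so that every block is genuinely flanked by spacers belonging to $v$ (so that each $\sigma$-block really occupies a single $v$-gap) and so that no $v$-gap meets $w_\tau$ in more than a fixed finite number of points. With $v$ chosen as the spacers together with $\tau$'s deep blocks this is routine, since membership in $v$ is decided block by block and only the finitely many shallow blocks can contribute to the $w_\tau$-intersection of a $v$-gap. Once $w_\sigma\ll v$ and $w_\tau\not\ll v$ are read off from the definition of $\ll$, Lemma~\ref{l:dominates} shows $w_\sigma$ and $w_\tau$ are not $E$-equivalent; hence $\restriction{E}{[x]^\N}$ has uncountably many classes, and Theorem~\ref{t:mmain} applies to conclude that $E$ is not hypersmooth.
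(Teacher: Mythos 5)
Your proof is correct, and your verification of the first hypothesis of Theorem \ref{t:mmain} (taking $S_1(x)$ and $S_1(S_0(x))$, which are disjoint subsets of $x$ joined to $x$ by $G$-paths) is exactly the paper's. For the second hypothesis, however, you take a genuinely different and considerably heavier route. The paper builds no perfect family at all: given any countable set $\{x_n : n \in \N\} \subseteq [x]^\N$, it chooses a single $y \subseteq x$ with $x_n \ll y$ for every $n$ (an easy diagonalization: pick the elements of $y$ sparsely enough inside $x$ that each gap of $y$ swallows long stretches of the relevant $x_n$'s), and then observes that $y$ cannot be $E$-equivalent to any $x_n$: by Lemma \ref{l:dominates} the set $\{w : w \ll y\}$ is a union of $E$-classes, it contains every $x_n$, but it does not contain $y$, since trivially $y \not\ll y$ (the open gaps of $y$ contain no points of $y$). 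So $[x]^\N$ is not covered by countably many classes, with the new point $y$ serving as its own separating witness. Your tree-of-blocks construction, with a tailor-made witness $v$ for each pair $\sigma \neq \tau$, rests on the same invariance lemma but replaces this three-line diagonalization with an explicit continuum-sized family. What it buys is slightly stronger, more concrete information — an explicit injection $\sigma \mapsto [w_\sigma]_E$ of $2^\N$ into the classes meeting $[x]^\N$, rather than mere non-coverability by countably many classes — at the price of the layout bookkeeping you describe, all of which the paper's argument avoids. Both versions suffice for Theorem \ref{t:mmain}, since its proof only needs that $\varphi([x]^\N)$ is not contained in countably many $E_1$-classes.
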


	\begin{proof}
		We check the conditions of Theorem \ref{t:mmain}.
		First, for any $x'$ we have $S_1 \circ S_0(x') \cap S_1(x') =\emptyset$ and both $S_1 \circ S_0(x')$ and $S_1(x')$ are contained in $x'$.
		
		Second, assume that $x$ is given and $\{x_n:n \in \N\} \subset [x]^\N$ is arbitrary. It is sufficient to produce a $y \in [x]^\N$ such that $y$ is not $E$ equivalent to either of the $x_n$'s. Let $y \subset x$ be so that $x_n \ll y$ for all $n$. Then by Lemma \ref{l:dominates} and by $y \not \ll y$ we are done.
	\end{proof}
	
	The following proposition concludes our construction.
	\begin{proposition}
		$E$ is $F_\sigma$. 
	\end{proposition}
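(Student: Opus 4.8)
The plan is to realize $E$ as a countable union of closed sets. Since $E$ is the connected--component equivalence relation of the graph $G$ generated by the two continuous maps $S_0$ and $S_1$, we have $x\mathrel{E}y$ exactly when there is a finite path $x=z_0,z_1,\dots,z_k=y$ in $G$, each consecutive pair being joined by applying $S_0$ or $S_1$ forwards or backwards. Encoding the list of operations as a finite word $w$ over the four--letter alphabet $\{S_0,S_1,S_0^{-1},S_1^{-1}\}$, I would write $E=\bigcup_w R_w$, where $R_w$ is the relation ``joined by a path of type $w$'' and the union ranges over the countably many finite words. It thus suffices to prove that every $R_w$ is closed in $([\N]^\N)^2$, the topology being the one obtained by identifying a set with its increasing enumeration, under which $S_0$ and $S_1$ are continuous.

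Each one--step relation is closed: the graph $\{(z,S_i(z))\}$ is closed by continuity of $S_i$, and so is its transpose $\{(a,z):S_i(z)=a\}$. The relation $R_w$ is the composition of $k$ such one--step relations, hence a priori only analytic, since it existentially quantifies over the intermediate vertices $z_1,\dots,z_{k-1}$ whenever $w$ contains backward letters. The main obstacle is precisely that $[\N]^\N$ in the enumeration topology is not compact, so a sequence of witnessing paths need not subconverge, and even inside $2^{\N}$ a limit of witnesses could be a finite set lying outside $[\N]^\N$. The way through is the observation that backward steps have coordinatewise--bounded witnesses: $S_0^{-1}(a)=\{\{m\}\cup a : m<\min a\}$ is finite, while $S_1^{-1}(a)$ consists of the sets obtained from $a$ by inserting one element into each gap of $a$ (and one below $\min a$), each such element being confined to a finite interval determined by $a$; in particular every enumeration coordinate of a preimage of $a$ is bounded in terms of $a$, and each preimage contains $a$ as its subsequence of odd--indexed entries.

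To finish I would argue that $R_w$ is sequentially closed. Suppose $(x^{(n)},y^{(n)})\to(x,y)$ with witnessing paths $z_0^{(n)},\dots,z_k^{(n)}$. Propagating along $w$ from $z_0^{(n)}=x^{(n)}\to x$, a forward letter carries a convergent sequence to a convergent sequence by continuity, whereas a backward letter produces a $z_i^{(n)}$ whose enumeration is coordinatewise bounded in terms of the already--convergent $z_{i-1}^{(n)}$. Passing to a subsequence -- a finite diagonalization over the positions $i\le k$ and all enumeration coordinates -- I may assume each $z_i^{(n)}$ converges coordinatewise to some $z_i$. Because images of infinite sets under $S_0,S_1$ are infinite and preimages contain their image as a subsequence, each limit $z_i$ is again infinite, so $z_i\in[\N]^\N$; and since each one--step relation is closed, each pair $(z_i,z_{i+1})$ realizes the prescribed operation. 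Hence $x=z_0,\dots,z_k=y$ is a path of type $w$, giving $(x,y)\in R_w$ and showing $R_w$ closed; therefore $E$ is a countable union of closed sets, i.e.\ $F_\sigma$. The step I expect to require the most care is verifying that the diagonally extracted limits remain in $[\N]^\N$ rather than collapsing to finite sets, which is exactly where the specific combinatorics of $S_0$ and $S_1$ (infinite images, coordinatewise--controlled preimages) is used.
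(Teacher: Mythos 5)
Your proof is correct, but it takes a somewhat different route from the paper's. You decompose $E$ according to the exact word $w$ over $\{S_0,S_1,S_0^{-1},S_1^{-1}\}$ labelling a path, and show each $R_w$ is closed by a sequential argument: forward letters propagate convergence by continuity, and backward letters have coordinatewise bounded witnesses (any $S_i$-preimage of $a$ lies pointwise below $a$), so a diagonal extraction produces limiting witnesses. Note that these limits lie in $[\N]^\N$ automatically: in the increasing-enumeration topology $[\N]^\N$ is a closed subspace of $\N^\N$, since a coordinatewise limit of strictly increasing sequences is strictly increasing. So your closing worry about limits ``collapsing to finite sets'' is an artifact of the $2^\N$ coding and costs nothing here; the place where the combinatorics of $S_0,S_1$ genuinely enters is the coordinatewise bounds themselves. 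The paper instead indexes the decomposition by a single integer $n$ bounding both the length of the path and of the words involved, and makes the domination global rather than step-by-step: it defines $D_s=\{(x,y): x\le S_{s(n-1)}\circ\cdots\circ S_{s(0)}(y)\}$, proves (Claim \ref{cl:dy}) that every vertex of a path ending at $y$ is pointwise below some forward iterate of $y$, and then deduces that each $R_n$ is closed because for fixed $y$ the fiber $\{x:(x,y)\in D_s\}$ is compact, so projecting out the intermediate vertices preserves closedness. Both arguments ultimately rest on the same inequality $x\le S_i(x)$: you apply it locally (each backward witness is pointwise below its predecessor), which keeps the argument elementary and avoids the $D_s$/Claim machinery; the paper applies it relative to the endpoint $y$, which trades your hands-on subsequence extraction for a one-line ``projection along compact fibers'' principle.
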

	\begin{proof}
		For $x,y \in [\N]^\N$ let $x\leq y$ stand for $\forall n \ (x_n\leq y_n)$. Note that for each $i<2$ we have $x \leq S_i(x)$. For $s \in 2^{<\N}$ let \[D_s=\{(x,y): x \leq S_{s(n-1)} \circ  \dots \circ S_{s(1)} \circ S_{s(0)}(y)\},\] 
		and let $D(y)=\bigcup_{s \in 2^{<\N}} \{x:(x,y) \in D_s\}$, that is, $D(y)$ is the collection of the points that are pointwise below some forward iterate of $y$.
		
		\begin{claim}
			\label{cl:dy}
			If $xGx'$ and $x \in D(y)$ then $x' \in D(y)$. Thus, $[y]_{E} \subseteq D(y)$ for all $y$.
		\end{claim} 
		\begin{proof}
			If $x'=S_i(x)$ for some $i<2$ and $(x,y) \in D_s$, then $(x',y) \in D_{s \concatt (i)}$. Otherwise, if $x=S_i(x')$, then $x'\leq S_i(x')=x$, so $x' \in D(y)$ as well.
		\end{proof}
		
		Let $(x,y) \in R_n$ iff $\exists k \leq n \ \exists x_0,\dots,x_{k-1} \ \exists s_0,\dots,s_{k-1} \in 2^{\leq n}$
		\[ \forall i<k \ (x_i G x_{i+1} \land (x_i,y) \in D_{s_i})\land x_0=x \land x_{k-1}=y.\]
		By Claim \ref{cl:dy}, $E= \bigcup_n R_n$. It follows from the continuity of the functions $(S_i)_{i<2}$ that each $D_s$ is closed and that for each $y$ the set $\{x:(x,y) \in D_s\}$ is compact. From this, it is straightforward to show that $R_n$ is closed for every $n \in \N$, which finishes our proof. 
	\end{proof}

	\section{A version of Pr\"omel-Voigt}
	
		Let us recall the canonization results of Pr\"omel and Voigt \cite{promel}. 
	
		\begin{theorem}
			[Pr\"omel-Voigt \cite{promel}]
			\label{t:prvoigt}
			Assume that $f:[\N]^\N \to 2^\N$ is a Borel function. There exist an $x \in [\N]^\N$ and a function $\Gamma:[x]^\N \to [\N]^{\leq \N}$ with the following properties:
		\begin{enumerate}	
			\item for every $y \in [x]^\N$ we have $\Gamma(y) \subseteq y$, 
				\item \label{p:canon} for every $y,z \in [x]^\N$ we have 
			\[f(y)=f(z) \iff \Gamma(y)=\Gamma(z)\]
			\item \label{p:finorinf} either for every $y \in [x]^\N$ we have that $\Gamma(y)$ is finite or for every $y \in [x]^\N$ it is infinite,
			\item \label{p:initial} for every $y,z \in [x]^\N$ we have that $\Gamma(y)$ is not a proper initial segment of $\Gamma(z)$,
			\item $\Gamma$ is continuous (where the topology on $[\N]^{ \leq \N}$ comes from its ideintification with $2^\N$).
		\end{enumerate}
		\end{theorem}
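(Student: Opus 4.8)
The plan is to prove this by the standard Ramsey-theoretic canonization machinery, combining the Galvin--Prikry theorem (every Borel subset of $[\N]^\N$ is completely Ramsey) with a diagonal fusion and the infinite canonical Ramsey theorem. As a first reduction I would pass to a pure set on which $f$ is continuous: each bit $f(\cdot)(m)$ induces a Borel partition of $[\N]^\N$, and since Borel sets are completely Ramsey, a fusion diagonalizing over the countably many bits yields an $x_0 \in [\N]^\N$ such that $\restriction{f}{[x_0]^\N}$ is continuous. Continuity then guarantees that $f(y)$ is determined by finite initial segments of the increasing enumeration of $y$, which is the property all later steps will exploit.

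Next, working inside $[x_0]^\N$, I would construct the canonical selector $\Gamma$. The guiding idea is that $\Gamma(y)$ should record exactly those elements of $y$ on which $f$ genuinely depends. Concretely, via a further fusion I would stabilize, uniformly across $[x]^\N$, the pattern describing which coordinates of (the enumeration of) $y$ can affect the value $f(y)$; reading this dependency set off $y$ continuously defines $\Gamma(y) \subseteq y$. By construction $\Gamma$ is continuous (property (5)) and satisfies $\Gamma(y) \subseteq y$ (property (1)), and moreover $f(y)$ is a function of $\Gamma(y)$ alone, which immediately gives the implication $\Gamma(y) = \Gamma(z) \Rightarrow f(y) = f(z)$.

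The crux is the converse half of property (2), namely $f(y) = f(z) \Rightarrow \Gamma(y) = \Gamma(z)$, and this is the step I expect to be the main obstacle since it carries the real Ramsey-theoretic content. I would argue by contradiction using the mixing afforded by the Ramsey property: if $\Gamma(y) \ne \Gamma(z)$ while $f(y) = f(z)$, then, because $\Gamma$ records honest dependence, one can interleave $y$ and $z$ inside $[x]^\N$ so as to build a single point on which the two incompatible dependency patterns are forced to produce different $f$-values, contradicting that $f$ factors through $\Gamma$. It is precisely here that the infinite canonical Ramsey theorem for $[\N]^\N$ is invoked to guarantee that such an amalgamating witness exists.

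Finally, properties (3) and (4) are secured by two more applications of Galvin--Prikry. The set $\set{y}{\Gamma(y) \text{ is finite}}$ is Borel, so passing to a pure subset renders it either all of $[x]^\N$ or disjoint from it, giving the finite/infinite dichotomy (3); shrinking once more removes the Borel set of pairs on which one canonical value is a proper initial segment of another, yielding (4). All of these requirements, together with the continuity reduction and the construction of $\Gamma$, would be folded into a single master fusion so that every property survives on one $x$; coordinating this fusion so that no requirement is destroyed by a later one is the principal bookkeeping difficulty beyond the rigidity step.
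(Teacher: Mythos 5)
Your sketch is not a proof, and it is worth noting first that the paper itself offers no proof to compare against: Theorem \ref{t:prvoigt} is quoted as a black box from Pr\"omel--Voigt \cite{promel}. So the only question is whether your argument stands on its own, and it does not, because it is circular at exactly the point you yourself identify as the crux. The implication $f(y)=f(z)\Rightarrow\Gamma(y)=\Gamma(z)$ carries the entire content of the theorem, and you discharge it by invoking ``the infinite canonical Ramsey theorem for $[\N]^\N$''. There is no such prior theorem to invoke: the Erd\H{o}s--Rado canonical Ramsey theorem concerns colorings of $[\N]^k$ for finite $k$; its naive extension to functions on $[\N]^\N$ is false for arbitrary functions under the axiom of choice; and its correct formulation for \emph{Borel} functions on $[\N]^\N$ is precisely the Pr\"omel--Voigt theorem you are trying to prove. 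Relatedly, the central object of your construction, $\Gamma(y)=$ ``the elements of $y$ on which $f$ genuinely depends,'' is never defined. Making this notion precise, showing it yields a continuous map with $\Gamma(y)\subseteq y$, and proving that equal $f$-values force equal dependency sets is where all the work in \cite{promel} lies; none of it is routine.

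There is a second structural problem: several of the required properties quantify over \emph{pairs}, and the Ramsey-theoretic tools you cite only control single sets. Galvin--Prikry (or Silver's theorem) homogenizes a Borel partition of $[\N]^\N$, i.e.\ a statement about one varying set $y$; properties \ref{p:canon} and \ref{p:initial} are statements about all pairs $(y,z)\in[x]^\N\times[x]^\N$. One cannot ``shrink once more to remove the Borel set of bad pairs,'' because there is no polarized analogue of Galvin--Prikry that makes an arbitrary Borel subset of the square disjoint from $[x]^\N\times[x]^\N$; indeed sets of pairs such as $\{(y,z): y\subseteq z\}$ can never be avoided on a cube. The usual trick of coding a pair by interleaving a single set only produces pairs in special relative position (e.g.\ disjoint ones), so it cannot substitute for a genuine argument here; in \cite{promel} property \ref{p:initial} emerges from the structure of the construction of $\Gamma$, not from a separate partition application. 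By contrast, your first step (passing to $x_0$ with $\restriction{f}{[x_0]^\N}$ continuous) and your treatment of property \ref{p:finorinf} (a partition of single sets) are fine and standard; the rest needs to be rebuilt from scratch, essentially by reproving Pr\"omel--Voigt.
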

		
		For $x \in [\N]^\N$ and $f:[\N]^\N \to X$ define \[C_{f,x}=\{f(x'):x' \mathbb{E}_0x\}.\]
	
		\begin{lemma}
			\label{l:const}
			Let $x$, $\Gamma$ and $f$ be as above. If for some $y \in [x]^\N$ the set $\Gamma(y)$ is finite, then for all $x' \in [x]^\N$ we have $f(y) \in C_{f,x'}$. 
		\end{lemma}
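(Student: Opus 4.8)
The plan is to exploit the continuity of $\Gamma$ together with the antichain property \ref{p:initial} to show that a single finite initial condition already forces the value of $\Gamma$, and then to realize that condition by a finite modification of an arbitrary $x'$. Throughout, $x' \mathbb{E}_0 x''$ is read as $x' \triangle x''$ being finite, so that $C_{f,x'}$ is the (countable) set of $f$-values over finite modifications of $x'$.

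Write $F = \Gamma(y)$, a finite subset of $y$, and fix $M > \max F$. First I would use continuity of $\Gamma$ at $y$: identifying $[\N]^{\leq \N}$ with $2^\N$, the restriction of the characteristic function of $\Gamma(y)$ to the coordinates $\{0,1,\dots,M\}$ is determined by finitely much information about $y$, so there is a $p$ such that every $z$ in the basic clopen set $U = \{z \in [x]^\N : z \cap [0,p] = y \cap [0,p]\}$ satisfies $\Gamma(z) \cap [0,M] = F$.

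The key step, and the place where property \ref{p:initial} is essential, is to upgrade this to $\Gamma(z) = F$ for every $z \in U$. Indeed, for such a $z$ the first $|F|$ elements of $\Gamma(z)$ are exactly the elements of $F$, and $\Gamma(z)$ has no element in $(\max F, M]$; hence if $\Gamma(z) \neq F$, then $\Gamma(z)$ must contain an element above $M$, which makes $F$ a proper initial segment of $\Gamma(z)$, contradicting \ref{p:initial}. So $U$ is a clopen set on which $\Gamma$ is constantly $F$.

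Finally, given an arbitrary $x' \in [x]^\N$, I would set $x'' = (y \cap [0,p]) \cup (x' \setminus [0,p])$. Then $x'' \subseteq x$ is infinite, so $x'' \in [x]^\N$; it lies in $U$ by construction, so $\Gamma(x'') = F = \Gamma(y)$ and hence $f(x'') = f(y)$ by \ref{p:canon}; and $x'' \triangle x'$ is contained in the finite set $[0,p]$, so $x'' \mathbb{E}_0 x'$. Therefore $f(y) = f(x'') \in C_{f,x'}$, which is exactly what we need. I expect the only genuine subtlety to be the upgrade step above: continuity alone yields agreement of $\Gamma(z)$ with $F$ only on an initial block of coordinates, and it is precisely the no-proper-initial-segment property \ref{p:initial} that rules out a longer value sharing this block and thereby pins down $\Gamma(z) = F$ exactly.
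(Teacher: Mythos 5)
Your proof is correct and takes essentially the same approach as the paper's: continuity of $\Gamma$ pins down $\Gamma(z)$ on an initial block of coordinates over a clopen neighborhood of $y$, property \ref{p:initial} upgrades that initial-segment agreement to $\Gamma(z) = \Gamma(y)$ (hence $f(z)=f(y)$ by \ref{p:canon}), and a finite modification of $x'$ placed into that neighborhood provides the $\mathbb{E}_0$-equivalent witness. The paper compresses this into two sentences, but your expanded version fills in exactly the intended details.
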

		\begin{proof}
			Let $x' \in [x]^\N$ be arbitrary. By the continuity of $\Gamma$ we can find some $x'' \mathbb{E}_0x'$ with $x'' \in [x]^\N$ such that $\Gamma(y)$ is an initial segment of $\Gamma(x'')$. By \ref{p:initial} this implies that $\Gamma(x'')=\Gamma(y)$, so $f(y)=f(x'') \in C_{f,x'}$.  
		\end{proof}
		
		Let us point out that the preceeding argument is the only thing what one needs to prove Theorem \ref{t:main}, in the case of a single function. However, to treat countably many functions, we have to prove an analogous statement for sets of the form $[s,x]$, where $s \in \N^{<\N}$ and $x \in [\N]^\N$. This requires some work and the utilization of the result of Mathias and Soare mentioned in the introduction.

		If $\Gamma$ is a function as above, define a partial function $\Gamma^*:2^\N \to [x]^\N$ by
		\[\Gamma^*(w)=z\iff \exists y \in [x]^\N \ f(y)=w \text{ and } \Gamma(y)=z.\]
		The following is obvious from the definition.
		\begin{lemma} 
			\label{l:ongamma} 
			\begin{itemize}
				\item $\Gamma^*$ is relatively Borel on the (possibly proper) analytic set $f([x]^\N)$; in particular, it is Souslin-measurable.
				\item For every $y \in [x]^\N$ we have \[\Gamma^*(f(y))=\Gamma(y).\]
			\end{itemize}
			
		\end{lemma}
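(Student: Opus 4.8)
The plan is to first check that $\Gamma^*$ is genuinely a single-valued partial function whose domain is exactly the analytic set $A := f([x]^\N)$, and only then establish the two bullet points. The second bullet becomes immediate once single-valuedness is in hand, while the first follows from a Luzin separation argument.

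First I would verify well-definedness: if $f(y) = w = f(y')$ with $y, y' \in [x]^\N$, then property \ref{p:canon} of Theorem \ref{t:prvoigt} gives $\Gamma(y) = \Gamma(y')$, so the value $\Gamma^*(w)$ does not depend on the choice of preimage. Its domain is $A = f([x]^\N)$, which is analytic since $f$ is Borel and $[x]^\N$ is a closed, hence \Polish, subspace. The second bullet is then immediate: for $y \in [x]^\N$, the witness $y$ itself shows $\Gamma^*(f(y)) = \Gamma(y)$.

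For the first bullet, identify $[\N]^{\leq \N}$ with $2^\N$ as in Theorem \ref{t:prvoigt} and fix a basic clopen $U \subseteq 2^\N$. Using single-valuedness one checks that
\[
(\Gamma^*)^{-1}(U) = f\big(\Gamma^{-1}(U) \cap [x]^\N\big) \quad\text{and}\quad A \setminus (\Gamma^*)^{-1}(U) = f\big(\Gamma^{-1}(2^\N \setminus U) \cap [x]^\N\big).
\]
Since $\Gamma$ is continuous and $U$ is clopen, both $\Gamma^{-1}(U) \cap [x]^\N$ and its relative complement are relatively clopen in $[x]^\N$, hence \Borel; applying the \Borel map $f$, both displayed sets are analytic. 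They are disjoint (this is exactly where single-valuedness, via \ref{p:canon}, is used: a common point would furnish $y,y'$ with $f(y)=f(y')$ but $\Gamma(y)\in U$, $\Gamma(y')\notin U$) and their union is $A$. By the Luzin separation theorem there is a \Borel set $B \subseteq 2^\N$ with $(\Gamma^*)^{-1}(U) \subseteq B$ and $B \cap \big(A \setminus (\Gamma^*)^{-1}(U)\big) = \emptyset$, whence $B \cap A = (\Gamma^*)^{-1}(U)$.

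Since the basic clopen sets generate the \Borel structure, this shows every $\Gamma^*$-preimage of a \Borel set is relatively \Borel in $A$, so $\Gamma^*$ is relatively \Borel on $A$; as \Borel and analytic sets lie in the $\sigma$-algebra generated by $\bs$, it is in particular Souslin-measurable. The only non-formal step is this separation argument; the point to get right is that single-valuedness forces the two analytic sets to partition $A$, which is precisely what licenses the appeal to Luzin separation. Everything else is bookkeeping with the canonization properties.
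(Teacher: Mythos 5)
Your proof is correct. The paper itself offers no argument here --- it introduces the lemma with ``The following is obvious from the definition'' --- so there is nothing to compare line-by-line; what you have done is supply the details that the author treats as folklore. The fact being invoked implicitly is the classical one that a partial function with analytic graph, defined on an analytic set, is Souslin-measurable (indeed relatively Borel on its domain), and your argument is exactly the standard proof of that fact, specialized to $\Gamma^*$: well-definedness from property (2) of Theorem \ref{t:prvoigt}, the identities $(\Gamma^*)^{-1}(U) = f(\Gamma^{-1}(U))$ and $A \setminus (\Gamma^*)^{-1}(U) = f(\Gamma^{-1}(2^\N \setminus U))$, disjointness of these two analytic sets via single-valuedness, and then Luzin separation to produce a Borel $B$ with $B \cap A = (\Gamma^*)^{-1}(U)$, followed by the $\sigma$-algebra bookkeeping to pass from basic clopen sets to all Borel sets. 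All of these steps check out: $[x]^\N$ is closed in $[\N]^\N$, so $\Gamma^{-1}(U) $ is Borel and its $f$-image analytic; the continuity of $\Gamma$ (property (5)) is what makes the preimages of clopen sets manageable, though mere Borelness of $\Gamma$ would have sufficed for this lemma. In short, your write-up is a correct and complete expansion of a step the paper leaves to the reader.
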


		We will need the following theorem (see also \cite{zapletal}).
	
		\begin{theorem}[Mathias \cite{mathias}, Soare \cite{soare}]
			\label{t:canonize}
			Let $(g_n)_{n \in \N}:[\N]^\N \to [\N]^\N$ be a collection of Souslin-measurable functions. There exists an $x \in [\N]^\N$ such that for every $y \in [x]^\N$ and for every $n \in \N$ we have that $g_n(y) \in [x]^\N$ implies that $g_n(y) \setminus y$ is finite.    
		\end{theorem}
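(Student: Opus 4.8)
The plan is to obtain Theorem~\ref{t:canonize} from the completely Ramsey property of Souslin sets (the Galvin--Prikry--Ellentuck theory) together with a Mathias-style fusion; this is exactly where the results of \cite{mathias} and \cite{soare} enter. First I would pass to continuous functions. Since each $g_n$ is Souslin-measurable and the completely Ramsey sets form a $\sigma$-algebra containing the Souslin sets, every set of the form $\{y : b \in g_n(y)\}$ is completely Ramsey, i.e.\ has the Baire property in the Ellentuck topology. Applying the Galvin--Prikry theorem to a countable generating family and diagonalising, I would obtain a single $x_0 \in [\N]^\N$ such that every $\restriction{g_n}{[x_0]^\N}$ is continuous in the metric topology; concretely, for every $y \in [x_0]^\N$ and every $b$, the membership $b \in g_n(y)$ is decided by a finite initial segment of $y$. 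This is the same passage to a continuous model that already underlies Theorem~\ref{t:prvoigt}.

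Next I would reformulate the goal as a dichotomy. Writing the set to be built as $x = \{a_0 < a_1 < \cdots\} \subseteq x_0$, it suffices to arrange that for every $n$ and every $y \in [x]^\N$, \emph{either} some element of $g_n(y)$ lies outside $x$ (so that $g_n(y) \notin [x]^\N$ and the implication holds vacuously) \emph{or} $g_n(y) \setminus y$ is finite. I would produce $x$ by a fusion, maintaining a decreasing sequence of infinite reservoirs $x_0 \supseteq R_0 \supseteq R_1 \supseteq \cdots$ with $a_k = \min R_{k-1}$ and $\min R_k > a_k$. At stage $k$, for each $n \le k$ and each stem $s \subseteq \{a_0,\dots,a_{k-1}\}$, I would use the completely Ramsey property to thin the reservoir so that the truth value of $b \in g_n(s \cup t)$ becomes constant over all tails $t$ taken from the new reservoir, for each $b \le a_k$. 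The key point of this homogenisation is that, after it, whether a small value is output depends only on the stem and not on how $y$ continues; I would then choose $a_k$ and shrink $R_k$ so that every output of $g_n(y)$ that lies below the new reservoir and is not in $y$ is pushed into a gap of $x$ (hence outside $x$, giving the escaping alternative), while the remaining outputs are absorbed into $y$. Countably many $g_n$ are treated by the standard dovetailing inside the fusion, and one checks at the end that $g_n(y) \subseteq x$ forces all but finitely many outputs into $y$, so $g_n(y) \setminus y$ is finite.

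The step I expect to be the main obstacle is precisely this homogenise-and-choose operation at each stage. A value can be \emph{tail-independently forced} into $g_n(y)$ by a stem that does not contain it, and one must choose the new element $a_k$ and thin the reservoir so as to avoid placing such trapped values into $x$, while \emph{simultaneously} preserving, for every $y$, the option of an escaping output whenever $g_n(y)$ fails to be almost contained in $y$. Reconciling these two demands across all stems and all functions at once, and ensuring that the whole of $[x]^\N$ (not merely the generic branches) is captured by the fusion conditions, is the delicate combinatorial core. This is exactly the content supplied by the selective / happy-family machinery of Mathias~\cite{mathias} and the degree-theoretic analysis of Soare~\cite{soare} (see also \cite{zapletal}); by contrast, the reduction to continuity in the first paragraph and the final verification are routine.
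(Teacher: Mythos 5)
Your proposal takes essentially the same route as the paper: the paper does not reprove this theorem either, but cites Mathias and Soare for the Borel case and remarks that, by Silver's theorem, Souslin-measurable sets have the Ramsey property, so the argument extends without modification---which is precisely the role the completely Ramsey property of Souslin sets plays in your first paragraph. Like you, the paper delegates the entire combinatorial core (your ``homogenise-and-choose'' fusion) to \cite{mathias} and \cite{soare}, so the gap you acknowledge is not a gap relative to the paper's own treatment.
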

	
		Note that Soare only establishes the above theorem for Borel functions (and the proof of Kanovei-Sabok-Zapletal is also only for such), however, as by Silver's theorem \cite{silver1970every} every Souslin-measurable set has the Ramsey property, the proof extends to the class of Souslin-measurable maps without further modification.

	The key ingredient to prove Theorem \ref{t:mmain} is the following. For a function $f$ and an $x \in [\N]^\N$ define $C_{f,x}=\{f(x'):x'\mathbb{E}_0x\}$.

	\begin{lemma}
		\label{l:main}
		Let $s \in \N^{<\N}$ and $x \in [\N]^\N$ with $\max s <\min x$ be arbitrary and $f:[\N]^\N \to 2^\N$ be a Borel function. There exists an $x'\in [x]^\N$ such that whenever $y,z \in [s \cup x']^\N$ are almost disjoint and $f(y)=f(z)$ then $f(y) \in C_{f,x''}$ for each $x'' \in [x']^\N$.
	\end{lemma}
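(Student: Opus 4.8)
The plan is to reduce everything to \emph{slices} indexed by the finitely many subsets $t \subseteq s$, canonize each slice by Pr\"omel--Voigt, and then split along the finite/infinite dichotomy of property \ref{p:finorinf}. Since $\max s < \min x$, every $y \in [s \cup x']^\N$ with $x' \subseteq x$ splits uniquely as $y = t \sqcup w$ where $t = y \cap s \subseteq s$ and $w = y \cap x' \in [x']^\N$, and $\min w > \max t$. For each $t \subseteq s$ I define the Borel function $f_t(w) = f(t \cup w)$ on $[x]^\N$. Applying Theorem \ref{t:prvoigt} successively (one refinement per $t$, using that the canonization relativizes to $[x^{(i)}]^\N$) I obtain $x_1 \in [x]^\N$ and, for each $t$, a continuous $\Gamma_t \colon [x_1]^\N \to [\N]^{\le \N}$ canonizing $f_t$ with all five properties; by property \ref{p:finorinf} each $t$ is either a \emph{finite slice} or an \emph{infinite slice}, and this dichotomy survives passing to any $[x_2]^\N \subseteq [x_1]^\N$.

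Finite slices are already harmless. For such a $t$, Lemma \ref{l:const} applied to $f_t$ gives $f_t(w) \in C_{f_t,x''}$ for every $w$ and every $x'' \in [x_1]^\N$; since $t \cup w' \mathbb{E}_0 x''$ whenever $w' \mathbb{E}_0 x''$ (as $t$ is finite and below $\min x''$), we get $C_{f_t,x''} \subseteq C_{f,x''}$, so $f(y) = f_t(w) \in C_{f,x''}$ with no hypothesis on a partner $z$ at all.

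The infinite slices are the crux, and here I invoke Mathias--Soare. For an infinite slice $t$ let $\Gamma_t^* \colon 2^\N \rightharpoonup [x_1]^\N$ be the Souslin-measurable partial map of Lemma \ref{l:ongamma}, so $\Gamma_t^*(f_t(w)) = \Gamma_t(w)$. For every infinite slice $t$ and every $t' \subseteq s$ set $g_{t,t'} = \Gamma_t^* \circ f_{t'}$, a Souslin-measurable partial map into $[\N]^\N$, extended to a total map by the identity off its (analytic) domain. Applying Theorem \ref{t:canonize} (relativized to $[x_1]^\N$) to this finite family produces $x' := x_2 \in [x_1]^\N$ such that for every $v \in [x_2]^\N$, whenever $g_{t,t'}(v) \in [x_2]^\N$ one has $g_{t,t'}(v) \setminus v$ finite. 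I claim this $x'$ works. Take almost disjoint $y = t \cup w$ and $z = t' \cup v$ in $[s \cup x']^\N$ with $f(y) = f(z)$, so $f_t(w) = f_{t'}(v)$ and $w, v \in [x']^\N$ are almost disjoint. If $t$ is a finite slice we are done by the previous paragraph. If $t$ is an infinite slice, canonization (Lemma \ref{l:ongamma}) gives $g_{t,t'}(v) = \Gamma_t^*(f_{t'}(v)) = \Gamma_t^*(f_t(w)) = \Gamma_t(w) \subseteq w \subseteq x'$, which is infinite and hence in $[x']^\N$; Mathias--Soare then forces $\Gamma_t(w) \setminus v$ finite, while $\Gamma_t(w) \cap v \subseteq w \cap v$ is finite by almost disjointness, so $\Gamma_t(w)$ is finite, contradicting that $t$ is an infinite slice. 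Thus no such pair exists and the implication holds vacuously.

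The main obstacle is precisely the cross-slice case $t \neq t'$, where the two matching witnesses $y$ and $z$ live in different slices. When $t = t'$ canonization finishes immediately, since $\Gamma_t(w) = \Gamma_t(v) \subseteq w \cap v$ is forced to be finite; but for $t \neq t'$ canonization of a single $f_t$ says nothing, and one genuinely needs to localize $\Gamma_t^*(f_{t'}(v))$ via Theorem \ref{t:canonize} and combine it with the containment $\Gamma_t(w) \subseteq w$ to contradict infiniteness. Ensuring $\Gamma_t^*$ is Souslin-measurable (Lemma \ref{l:ongamma}) is exactly what licenses feeding it into the Souslin-measurable form of Mathias--Soare.
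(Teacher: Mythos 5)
Your proposal is correct and follows essentially the same route as the paper's proof: the same slice functions $f_t$, Pr\"omel--Voigt canonization for each $t \subseteq s$, the same cross-slice compositions $\Gamma_t^* \circ f_{t'}$ fed into the Souslin-measurable Mathias--Soare theorem, and the same finiteness argument (containment in $w$ plus almost-containment in $v$ plus almost disjointness), finished by Lemma \ref{l:const}. The only differences are organizational --- you make the finite/infinite dichotomy of property \ref{p:finorinf} an explicit case split (the paper instead directly shows $\Gamma_r(y \cap x')$ is finite) and you spell out the extension of the partial maps to total ones, a technicality the paper leaves implicit.
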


	\begin{proof}
		For $t \subseteq s$ and $y \in [x]^\N$ define a function $f^t$ by \[f^t=f(t \cup y).\]
		
		By applying Theorem \ref{t:prvoigt} $2^{|s|}$ many times, we can find an $x^* \in [x]^\N$ and functions $\Gamma_{t},\Gamma^*_t$ with the properties as in the theorem with respect to the function $f^t$ on $[x^*]^\N$. For $r,t \subseteq s$ let $g^{r,t}$ be the function $\Gamma^*_t \circ f^r$. Note that $g^{r,t}$ is Souslin-measurable. By Theorem \ref{t:canonize} we can find an $x' \in [x^*]^\N$ such that whenever $y \in [x']^\N$ and $g^{r,t}(y) \in [x']^\N$ for some $r,t \subseteq s$ then $g^{r,t}(y) \setminus y$ is finite. 
		
		We claim that $x'$ has the desired property. Let $y,z \in [s, x']^\N$ be almost disjoint, and assume that $f(y)=f(z)$, let $r=y \cap s$ and $t=z \cap s$. Using Lemma \ref{l:const} it suffices to show that $\Gamma_r(y \cap x')$ is finite: indeed, in this case $f(y)=f^r(y \cap x') \in C_{f^r,x''} \subseteq C_{f,x''}$ for each $x'' \in [x^*]^\N$, in particular $f(y) \in C_{f,x'}$. Now, 
		 \[g^{t,r}(z \cap x')=\Gamma^*_r(f^t(z\cap x'))=\Gamma^*_r(f(t \cup z))=\Gamma^*_r(f(z))=\]\[\Gamma^*_r(f(y))=\Gamma^*_r(f(r \cup y))=\Gamma^*_r(f^r(y))=\Gamma^*_r(f^r(y \cap x'))=\]\[\Gamma_r(y \cap x') \subseteq y \cap x'.\]
		 So, as $z \cap x', g^{t,r}(z \cap x') \in [x']^\N$, we must have that \[ g^{t,r}(z \cap x') \setminus (z \cap x')=\Gamma_r(y \cap x') \setminus (z \cap x') \] is finite. Therefore, $\Gamma_r(y \cap x')$ is almost contained in both $y \cap x'$ and $z \cap x'$, which show that this set is finite. 
	 \end{proof}

	Finally, we prove Theorem \ref{t:main} using a standard fusion argument.
	
	\begin{proof}[Proof of Theorem \ref{t:main}]
 	Define sequences $s_0 \subset s_1 \subset \dots$ and $x_0 \supseteq x_1 \supseteq \dots $ of elements in $[\N]^{<\N}$ and $[\N]^{\N}$ respectively, such that for every $n \in \N$
 	\begin{itemize}
 		\item $\max s_n<\min x_{n}$,
 		\item $s_{n+1}=s_n \cup \{\min x_{n}\}$,
 		\item for every $y,z \in [s_{n} \cup x_n]^\N$ almost disjoint, if $f_n(y)=f_n(z)$ then $f_n(y) \in C_{f_n,x_n}$
 	\end{itemize}
 	by induction on $n$. Indeed, let $s_{-1}=\emptyset$ and $x_{-1}=\N$, given $s_{n-1},x_{n-1}$, let $s_n =s_{n-1}\cup \{\min x_{n-1}\}$. Using Lemma \ref{l:main} for every $s \subseteq s_n$, we can find an $x_{n} \subseteq x_{n-1}$ that satisfies all the above properties. 
 	
 	Now let $x=\bigcup_{n \in \N} s_n$, $C=\bigcup_n C_{f_n,x_n}$, and assume that $y,z \in [x]^\N$ are almost disjoint and $f_n(y)=f_n(z)$. Then $y,z \in [s_{n} \cup x_n]^\N$, and hence $f_n(y) \in C_{f_n,x_n} \subseteq C$, by the third property of the induction. 
 \end{proof}

	\textbf{Acknowledgments.} We would like to thank Jan Greb\'ik, Alexander Kechris,  Adrian Mathias, and Stevo Todor\v{c}evi\'c for valuable comments and suggestions.


%

	\bibliographystyle{abbrv}
	\bibliography{bbl.bib}

\begin{thebibliography}{1}

\bibitem{zapletal}
V.~Kanovei, M.~Sabok, and J.~Zapletal.
\newblock {\em Canonical {R}amsey theory on {P}olish spaces}, volume 202 of
  {\em Cambridge Tracts in Mathematics}.
\newblock Cambridge University Press, Cambridge, 2013.

\bibitem{kechris1997classification}
A.~S. Kechris and A.~Louveau.
\newblock The classification of hypersmooth {B}orel equivalence relations.
\newblock {\em Journal of the American Mathematical Society}, 10(1):215--242,
  1997.

\bibitem{kechrissolecki}
A.~S. Kechris, S.~Solecki, and S.~Todor{\v c}evi\'c.
\newblock Borel chromatic numbers.
\newblock {\em Adv. Math.}, 141(1):1--44, 1999.

\bibitem{mathias}
A.~R.~D. Mathias.
\newblock Happy families.
\newblock {\em Ann. Math. Logic}, 12(1):59--111, 1977.

\bibitem{promel}
H.~J. Pr\"{o}mel and B.~Voigt.
\newblock Canonical forms of {B}orel-measurable mappings {$\Delta\colon\
  [\omega]^\omega\to{\bf R}$}.
\newblock {\em J. Combin. Theory Ser. A}, 40(2):409--417, 1985.

\bibitem{silver1970every}
J.~Silver.
\newblock Every analytic set is {R}amsey.
\newblock {\em The Journal of Symbolic Logic}, 35(1):60--64, 1970.

\bibitem{soare}
R.~I. Soare.
\newblock Sets with no subset of higher degree.
\newblock {\em J. Symbolic Logic}, 34:53--56, 1969.

\bibitem{toden}
S.~Todor\v{c}evi\'c and Z.~Vidny\'anszky.
\newblock A complexity problem for {B}orel graphs.
\newblock {\em Invent. Math., to appear}, 2021.

\end{thebibliography}
	
\end{document}